\newtheorem{prethm}{{\bf Theorem}}
\newenvironment{thm}{\begin{prethm}{\hspace{-0.5
em}{\bf.}}}{\end{prethm}}
\newtheorem{precor}{{\bf Corollary}}
\newenvironment{cor}{\begin{precor}{\hspace{-0.5
em}{\bf.}}}{\end{precor}}
\newtheorem{preprop}{{\bf Proposition}}
\newtheorem{preque}{{\bf Question}}
\newtheorem{preques}{{\bf Question}}
\newtheorem{prelemma}{{\bf Lemma}}
\newtheorem{prefact}{{\bf Fact}}
\newtheorem{preobs}{{\bf Observation}}
\newtheorem{prefig}{{\bf Figure}}
\newtheorem{prelemm}{{\bf Lemma}}
\newtheorem{preex}{{\bf Example}}
\newtheorem{prepro}{{\bf Proposition}}
\newenvironment{pro}{\begin{prepro}{\hspace{-0.5
em}{\bf.}}}{\end{prepro}}
\newtheorem{prelem}{{\bf Theorem}}
\newenvironment{lem}{\begin{prelem}{\hspace{-0.5
em}{\bf.}}}{\end{prelem}}
\newtheorem{preproof}{{\bf Proof.}}
\newenvironment{proof}[1]{\begin{preproof}{\rm
               #1}\hfill{$\rule{2mm}{2mm}$}}{\end{preproof}}
\newtheorem{preconj}{{\bf Conjecture}}
\newtheorem{predeff}{{\bf Definition}}
\newenvironment{deff}{\begin{predeff}{\hspace{-0.5
em}{\bf.}}}{\end{predeff}}
\def\newpic#1{}
\date{}
\begin{document}

\title{
{\Large{\bf Extremal Graph Theory for Metric Dimension and Girth}}}
%\\ cncm23.tex }}}
%

{\small
\author{
{\sc Mohsen Jannesari} %and
%{\sc Behnaz Omoomi}
\\
%[1mm]
%{\small \it  Department of Mathematical Sciences}\\
%{\small \it  Isfahan University of Technology} \\
%{\small \it 84156-83111, \ Isfahan, Iran}
}

%\subjclass[2000]{05C15.}

%\keywords{ $b$-chromatic number,  $b$-coloring, dominating coloring.\\
%\footnotetext[1]{The research  is supported by Isfahan University
%of Technology.}
\maketitle \baselineskip15truept
\begin{abstract}
 A set $W\subseteq V(G)$ is called a  resolving set for $G$, if
for each two distinct vertices $u,v\in V(G)$ there exists $w\in W$
such that $d(u,w)\neq d(v,w)$,  where $d(x,y)$ is the distance
between the vertices $x$ and $y$. The minimum cardinality of a
resolving set for $G$ is called the  metric dimension of $G$, and
denoted by $\beta(G)$. In  this paper, it is  proved that in a
connected graph $G$ of order $n$ which has a cycle, $\beta(G)\leq n-g(G)+2$,
where $g(G)$ is the length of a shortest cycle in $G$, and the
equality holds if and only if $G$ is a cycle, a complete graph or a
complete bipartite graph $K_{s,t}$, $ s,t\geq 2$.
\end{abstract}

{\bf Keywords:}  Resolving set; Metric dimension; Girth; $2$-connected graph.
%%%%%%%%%%%%%%%%%%%%%%%%%%%%%%%%%%%%%%%%%%%%%%%%%%%%%%%%%%%%%%%%%%%%%%%%%%%%%%%%%%%%
%%%%%%%%%%%%%%%%%%%%%%%%%%%%%%%%%%%%%%%%%%%%%%%%%%%%%%%%%%%%%%%%%%%%%%%%%%%%%%%%%%%%%%%
\section{Introduction}
 Throughout the paper, $G$ is a
finite, simple, and connected  graph of order $n$ with vertex set
$V(G)$ and edge set $E(G)$. The distance
between two vertices $u$ and $v$, denoted by $d(u,v)$, is the
length of a shortest path between $u$ and $v$ in $G$. The diameter
of $G$, denoted by $diam(G)$ is $\max\{d(u,v)\ |\ u,v\in V\}$. The
degree of a vertex $v$, $\deg(v)$, is the number of
its neighbors.
%The maximum degree and minimum degree of  graph $G$, are denoted by
%$\Delta(G)$ and $\delta(G)$, respectively.
The notations $u\sim v$
and $u\nsim v$ denote the adjacency and non-adjacency relations
between $u$ and $v$, respectively. A cycle of order $n$, $C_n$, is denoted by $(v_1,v_2,\ldots,v_n,v_1)$. The number of edges in a cycle is its
length. If $G$ has a cycle, then  the length of a shortest cycle in $G$
is called the girth
of $G$ and denoted by $g(G)$.
  For a subset $S$ of $V(G)$,
$G\setminus S$ is the induced subgraph $\langle V(G)\setminus
S\rangle$ by $V(G)\setminus S$ of $G$.
A vertex $v\in V(G)$ is a {\it cut vertex} in
$G$ if $G\setminus\{v\}$ has at least two components. If $G\neq K_n$ has no
cut vertex, then $G$ is called a {\it$2$-connected} graph.
\par
For an ordered set $W=\{w_1,w_2,\ldots,w_k\}\subseteq V(G)$ and a
vertex $v$ of $G$, the  $k$-vector
$$r(v|W):=(d(v,w_1),d(v,w_2),\ldots,d(v,w_k))$$
is called  the {\it metric representation}  of $v$ with respect
to $W$. The set $W$ is called a {\it resolving set} for $G$ if
distinct vertices have different representations.  A resolving
set for $G$ with minimum cardinality is  called a {\it metric
basis}, and its cardinality is the {\it metric dimension} of $G$,
denoted by $\beta(G)$.
It is obvious that to see  whether a given set $W$ is a resolving
set, it is sufficient to consider the
 vertices in $V(G)\backslash W$, because $w\in
W$ is the unique vertex of $G$ for which $d(w,w)=0$.
\par In~\cite{Slater1975}, Slater introduced the idea of a resolving
set and used a {\it locating set} and the {\it location number}
for a resolving set and the metric dimension,
respectively. He described the usefulness of these concepts when
working with U.S. Sonar and Coast Guard Loran stations.
Independently, Harary and Melter~\cite{Harary} discovered the
concept of the location number as well and called it the metric
dimension. For more results related to these concepts
see~\cite{k-dimensional,cartesian product,bounds,sur1,On randomly,landmarks}.
The
concept of a resolving set has various applications in diverse
areas including coin weighing problems~\cite{coin}, network
discovery and verification~\cite{net2}, robot
navigation~\cite{landmarks}, mastermind game~\cite{cartesian
product}, problems of pattern recognition and image
processing~\cite{digital}, and combinatorial search and
optimization~\cite{coin}.
%\begin{obs}~\rm\cite{extermal}\label{obs:twins}
%Let $G$ be a graph and $u,v\in V(G)$ such that,
%$N(v)\backslash\{u\}=N(u)\backslash\{v\}$. If $W$ resolves $G$,
%then $u$ or $v$ is in $W$. Moreover if $v\notin W$,
%then $(W\setminus\{u\})\cup\{v\}$ resolves $G$ as well.
%\end{obs}
Chartrand et al.~\cite{Ollerman} obtained the following bound
 for metric dimension in terms of order and diameter.
\begin{lem}~{\rm\cite{Ollerman}}\label{thm:B<n-d}
If $G$ is a connected graph of order $n$, then $\beta(G)\leq
n-diam(G)$.
\end{lem}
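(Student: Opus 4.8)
The plan is to produce an explicit resolving set of size $n-diam(G)$ by deleting the interior vertices together with one endpoint of a longest geodesic. First I would set $d=diam(G)$ and fix two vertices $u,v$ with $d(u,v)=d$, together with a shortest $u$--$v$ path $P\colon u=x_0,x_1,\ldots,x_d=v$; such a path exists and has exactly $d+1$ distinct vertices. I then define
\[
W=V(G)\setminus\{x_1,x_2,\ldots,x_d\},
\]
so that $|W|=n-d$ and the endpoint $u=x_0$ belongs to $W$.

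The heart of the argument is to verify that $W$ is a resolving set, and here I would lean on the reduction already noted in the introduction: since every $w\in W$ is the unique vertex at distance $0$ from itself, any vertex lying in $W$ is automatically separated from all other vertices, so it suffices to separate pairs of vertices drawn from $V(G)\setminus W=\{x_1,\ldots,x_d\}$. For this I use the standard fact that a subpath of a shortest path is again a shortest path, which yields $d(u,x_i)=i$ for every $i$. Consequently the single coordinate of $r(\cdot\,|W)$ coming from the landmark $u\in W$ already takes the pairwise distinct values $1,2,\ldots,d$ on $x_1,\ldots,x_d$, so any two of these vertices receive different metric representations.

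Combining the two observations, $W$ distinguishes every pair of distinct vertices of $G$ and is therefore a resolving set, whence $\beta(G)\le|W|=n-diam(G)$, as claimed.

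I do not expect a serious obstacle here; the proof is genuinely short. The only point that must not be overlooked is that the vertices kept in $W$ require no further checking, and it is precisely this that allows a single well-chosen landmark $u$ to resolve all $d$ deleted path-vertices at once. The one external fact being invoked—that subpaths of geodesics are geodesics—is immediate from the minimality of $d(u,v)$, so no real work is hidden in it.
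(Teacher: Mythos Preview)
Your argument is correct. Note, however, that the paper does not supply its own proof of this statement: Theorem~A is quoted from Chartrand et~al.\ \cite{Ollerman} and used as a known result, so there is nothing in the paper to compare against line by line.

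That said, your proof is exactly in the spirit of what the paper does for its analogous girth bound (Theorem~\ref{thm:B<n-g+2}). There the authors take a shortest cycle $C_g=(v_1,\ldots,v_g,v_1)$, observe that $\{v_1,v_2\}$ is a metric basis of $C_g$, and conclude that $V(G)\setminus\{v_3,\ldots,v_g\}$ resolves $G$. Your construction is the path version of the same idea: keep one endpoint $u=x_0$ of a diametral geodesic together with everything off the path, and use the fact that $d(u,x_i)=i$ to separate the deleted interior vertices. Both proofs rely on the remark in the introduction that vertices lying in the candidate set $W$ are automatically distinguished, so only pairs in $V(G)\setminus W$ need checking; you invoke this explicitly, while the paper leaves it implicit. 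Nothing is missing.
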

Ten years later, Hernando et al.~\cite{extermal} characterized all graphs
$G$ of order $n$ and metric dimension $n-diam(G)$. The main goal of this paper is to
prove that for a connected graph $G$ of order $n$ and girth $g(G)$
$$\beta(G)\leq n-g(G)+2$$
 and characterize all graphs  such that
 this bound is tight for them. In fact, it is proved that cycles, complete
and complete bipartite graphs are all graphs with $\beta(G)=n-g(G)+2$.
To prove the main results  the following known results are needed.
It is obvious that for a graph $G$ of order $n$,
$1\leq\beta(G)\leq n-1$. Chartrand et al.~\cite{Ollerman}
characterized all graphs of order $n$ and metric dimension $n-1$.
\begin{lem}~{\rm\cite{Ollerman}}\label{thm:B=n-1}
Let $G$ be a graph of order $n$. Then $\beta(G)=n-1$ if
and only if $G=K_n$.
\end{lem}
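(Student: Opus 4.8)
The plan is to establish the two implications separately: the forward implication is a direct distance computation in $K_n$, while the reverse implication drops out of the diameter bound already recorded as Theorem~\ref{thm:B<n-d}. Recall the general principle noted above: for any $W\subseteq V(G)$ each $w\in W$ is the unique vertex having $0$ in its own coordinate of the representation, so to check that $W$ resolves $G$ only the pairs of vertices in $V(G)\setminus W$ matter.

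For the ``if'' direction, suppose $G=K_n$. Any two distinct vertices are at distance $1$, so for every $W$ and every $x\in V(G)\setminus W$ the vector $r(x|W)$ is constantly $1$. Hence no two distinct vertices of $V(G)\setminus W$ can be resolved, which forces $|V(G)\setminus W|\le 1$ and therefore $|W|\ge n-1$ for every resolving set $W$. Conversely, $W=V(G)\setminus\{v\}$ resolves $K_n$ for any single vertex $v$, since the $n-1$ vertices in $W$ are separated by their private $0$ coordinate and the lone vertex $v$ has the all-ones vector. Thus $\beta(K_n)=n-1$.

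For the ``only if'' direction I would argue contrapositively, showing $G\neq K_n\Rightarrow\beta(G)\le n-2$. A connected graph that is not complete has two non-adjacent vertices, so $diam(G)\ge 2$; plugging this into the bound $\beta(G)\le n-diam(G)$ of Theorem~\ref{thm:B<n-d} yields $\beta(G)\le n-2<n-1$. Hence $\beta(G)=n-1$ can only occur when $G=K_n$, completing the equivalence.

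If one instead wants a self-contained proof of $G\neq K_n\Rightarrow\beta(G)\le n-2$ that does not quote Theorem~\ref{thm:B<n-d}, the natural attempt is to fix non-adjacent $u,v$ and argue that $W=V(G)\setminus\{u,v\}$ resolves $G$: by the principle above only $\{u,v\}$ must be separated, so it suffices to find one $w\in W$ with $d(u,w)\neq d(v,w)$. Taking $u,v$ with $d(u,v)=diam(G)$ and letting $w$ be the second vertex of a shortest $u$--$v$ path disposes of every case with $diam(G)\ge 3$, since then $d(u,w)=1\neq diam(G)-1=d(v,w)$. The real obstacle is the diameter-two situation in which $u$ and $v$ are \emph{twins}, satisfying $d(u,w)=d(v,w)$ for \emph{all} $w$ (as happens for two vertices in one part of a complete multipartite graph); there, deleting both $u$ and $v$ never separates them, and one must keep a twin in $W$ while deleting some other vertex. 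Routing the argument through the diameter bound is precisely what sidesteps this bookkeeping.
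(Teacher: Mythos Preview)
The paper does not give its own proof of this statement; it is quoted as Theorem~B from Chartrand et~al.\ \cite{Ollerman} and used as a black box in the proof of Theorem~\ref{thm:characterisation}. So there is no in-paper argument to compare against.

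Your argument is correct. The ``if'' direction is the standard observation that in $K_n$ every vertex outside $W$ has the all-ones representation, forcing $|W|\ge n-1$. Your ``only if'' direction via Theorem~\ref{thm:B<n-d} is clean and valid: a connected non-complete graph has $diam(G)\ge 2$, hence $\beta(G)\le n-2$. The one caveat is contextual rather than mathematical: both Theorems~\ref{thm:B<n-d} and~\ref{thm:B=n-1} are imported from the same source~\cite{Ollerman}, so deriving the latter from the former is perfectly fine \emph{here} but would need checking for circularity if you were reproving the results of~\cite{Ollerman} from scratch. Your final paragraph correctly diagnoses why the naive ``delete two non-adjacent vertices'' argument stalls on diameter-two twin pairs, which is exactly the reason the diameter bound (whose own proof walks along a diametral path) is the tidier route.
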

They also characterized all graphs of order $n$ and metric dimension $n-2$.
\begin{lem} {\rm\cite{Ollerman}} \label{thm:n-2}
Let $G$ be a  graph of order $n\geq 4$. Then $\beta(G)=n-2$ if and
only if $G=K_{s,t}~(s,t\geq 1), G=K_s\vee\overline K_t~(s\geq 1,
t\geq 2)$, or $G=K_s\vee (K_t\cup K_1)~ (s,t\geq 1)$.
\end{lem}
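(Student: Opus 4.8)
The plan is to prove both implications, with essentially all of the work in the forward direction; the backward direction only requires exhibiting, for each listed graph, a resolving set of size $n-2$ together with a reason why none of size $n-3$ exists. First I would fix the ambient distance structure. Since $\beta(G)=n-2<n-1$, Theorem~\ref{thm:B=n-1} gives $G\ne K_n$, and Theorem~\ref{thm:B<n-d} gives $n-2\le n-diam(G)$, so $diam(G)\le 2$; as $diam(G)=1$ would force $G=K_n$, we conclude $diam(G)=2$. Hence every distance is $1$ or $2$, and two vertices $u,v$ are \emph{twins} (meaning $d(u,w)=d(v,w)$ for every $w\ne u,v$) precisely when they have equal neighbourhoods, i.e. $N[u]=N[v]$ (true twins, adjacent) or $N(u)=N(v)$ (false twins, non-adjacent).

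The engine of the proof is a deletion criterion. In a connected graph every vertex $x$ resolves each pair containing $x$, so for any $S\subseteq V(G)$ the set $V(G)\setminus S$ fails to resolve only pairs lying inside $S$. In particular $V(G)\setminus\{u,v\}$ is resolving iff $u,v$ are not twins, which (as $G\ne K_n$) already yields $\beta(G)\le n-2$, so the substance is the reverse inequality. Writing $R(x,y)=\{w\ne x,y:\ d(x,w)\ne d(y,w)\}$ for the external resolvers of a pair, the same observation shows $V(G)\setminus\{x,y,z\}$ is resolving iff $R(x,y)\not\subseteq\{z\}$, $R(x,z)\not\subseteq\{y\}$ and $R(y,z)\not\subseteq\{x\}$. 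Thus $\beta(G)=n-2$ is equivalent to the statement that \emph{every} triple $\{x,y,z\}$ contains a pair, say $\{x,y\}$, with $R(x,y)\subseteq\{z\}$; that is, a twin pair, or a pair whose unique external resolver is the third vertex. I would then record that the twin relation is an equivalence relation whose classes are cliques or independent sets (no vertex has both a true and a false twin, which a short neighbourhood computation rules out), so $G$ is a blow-up of a connected quotient on the twin classes $V_1,\dots,V_k$; since each class of size $n_i$ forces $n_i-1$ of its vertices into any resolving set, $\beta(G)\ge n-k$ and hence $k\ge 2$.

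The forward direction is then a case analysis on the classes. If $k=2$ the quotient is $K_2$ (an empty quotient would disconnect $G$), so all edges run between $V_1$ and $V_2$: two independent classes give $K_{s,t}$, one clique and one independent class give $K_s\vee\overline K_t$, and two cliques give $K_n$, excluded by $diam(G)=2$. The heart of the argument is $k\ge 3$. A triple of representatives from three distinct classes has no twin pair, so the criterion forces one cross-pair to be an \emph{almost-twin} pair whose single external resolver is the third representative; analysing $R(x,y)$ class by class (an external class resolves either the whole pair or none of it, and the internal contribution of $V_i$ vanishes exactly when its type matches the quotient distance to $V_j$) shows this can happen only when the third class is a \emph{singleton}. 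Consequently among any three classes at least one is a singleton, so at most two classes have size $\ge 2$; pushing this bookkeeping pins the quotient to the path on three vertices with the singleton as an endpoint and both other classes cliques, which is exactly $K_s\vee(K_t\cup K_1)$, and it simultaneously excludes $k\ge 4$ (four classes force a triple of classes with no admissible singleton resolver, yielding a resolving triple and $\beta\le n-3$).

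Finally I would close the backward direction with the same criterion: in $K_{s,t}$ and $K_s\vee\overline K_t$ there are only two twin classes, so every triple repeats a class and hence contains a twin pair; in $K_s\vee(K_t\cup K_1)$ the only all-distinct triple is $\{a,c,p\}$ with $a$ in the apex clique, $c$ in the other clique and $p$ the isolated vertex, and there $R(a,c)=\{p\}$, so a bad pair is again present. Together with the presence of two non-twin vertices in each family, this gives $\beta=n-2$. I expect the main obstacle to be the $k\ge 3$ analysis: proving that an almost-twin cross-pair forces a singleton third class, and then converting ``every three classes contain a singleton'' into the precise quotient of $K_s\vee(K_t\cup K_1)$ while excluding all configurations with four or more classes. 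The bounded-diameter bookkeeping of $R(x,y)$ is delicate precisely because singleton classes are the only ones that can serve as unique resolvers.
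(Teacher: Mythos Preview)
The paper does not give its own proof of this lemma: it is quoted from Chartrand, Eroh, Johnson and Ollermann and invoked only as a black box in the proof of Theorem~\ref{thm:characterisation}. There is therefore nothing in the paper to compare your argument against.

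Your plan is nonetheless a correct outline, and it is close in spirit to the original argument in the cited source. The reduction to $diam(G)=2$ via Theorems~\ref{thm:B<n-d} and~\ref{thm:B=n-1}, the reformulation of $\beta(G)>n-3$ as ``every triple $\{x,y,z\}$ contains a pair whose set of external resolvers is contained in $\{z\}$'', and the passage to the twin quotient are all sound. The only genuinely thin spot is the $k\ge 3$ analysis, which you yourself flag: from ``among any three classes at least one is a singleton'' you still have to pin down the quotient (why $P_3$ rather than $K_3$, why the singleton must sit at an endpoint, why the two remaining classes must both be cliques rather than independent), and you have to rule out $k\ge 4$ (where many singleton classes are a priori possible, so the contradiction comes not from a singleton count but from the fact that any fourth class supplies an additional external resolver for the almost-twin pair and breaks $R(x,y)\subseteq\{z\}$). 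None of these steps is hard, but each needs an explicit line or two that your proposal promises without yet delivering.
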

\par
The following definition is needed to state some results in the next section.
\begin{deff}\label{def:ear}
An \emph{ear} of a graph $G$ is a maximal path whose internal vertices are
of degree $2$ in $G$. An \emph{ear decomposition} of $G$ is a decomposition
$G_0,G_1,\ldots,G_k$ such that $G_0$ is a cycle and for each $i,~1\leq i\leq k$,
$G_i$ is an ear of $G_0\cup G_1\cup\ldots\cup G_i$.
\end{deff}
Whitney \cite{whitney} proved the following important result for $2$-connected graphs.
\begin{lem}\label{thm:whitney}~\rm\cite{whitney}
A graph is $2$-connected if and only if it has an ear decomposition. Moreover,
every cycle in a $2$-connected graph is the initial cycle in some ear decomposition.
\end{lem}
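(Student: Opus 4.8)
The plan is to prove the two implications separately, together with the ``moreover'' clause, by the standard inductive and greedy arguments; throughout I use that a $2$-connected graph is a graph without a cut vertex on at least three vertices, which is the working hypothesis consistent with the statement (note $G_0$ being a cycle already forces $|V(G)|\geq 3$).

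For the ``if'' direction (an ear decomposition forces $2$-connectivity), I would argue by induction on $i$ that each partial union $H_i := G_0\cup G_1\cup\cdots\cup G_i$ is $2$-connected. The base case is immediate, since $G_0$ is a cycle and $C_n$ has no cut vertex and at least three vertices. For the inductive step I would isolate the sublemma: if $H$ is $2$-connected and $P$ is a path with distinct endpoints $x,y\in V(H)$ whose internal vertices and edges lie outside $H$, then $H\cup P$ is $2$-connected. To prove the sublemma I check that no vertex of $H\cup P$ is a cut vertex. Deleting an internal vertex of $P$ leaves the two resulting sub-paths still attached to the connected graph $H$ at $x$ and at $y$; deleting a vertex $w\in V(H)$ leaves $H-w$ connected (as $H$ is $2$-connected), and since at least one of $x,y$ survives, $P$ remains attached. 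Applying this with $H=H_{i-1}$ and $P=G_i$ completes the induction, so in particular $H_k=G$ is $2$-connected.

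For the ``only if'' direction and the ``moreover'' clause simultaneously, I would fix an arbitrary cycle $C$ of $G$ (which exists since $G$ is $2$-connected), set $H_0=C$, and grow a chain $H_0\subseteq H_1\subseteq\cdots$ of $2$-connected subgraphs, each obtained from the previous by adjoining a single ear, until reaching $G$. The heart of the argument is the growth step: given a $2$-connected subgraph $H\subsetneq G$, produce an ear of $G$ with both endpoints in $V(H)$ and everything else outside $H$. Since $G$ is connected and $H$ is proper, there is an edge $uv\in E(G)\setminus E(H)$ with $u\in V(H)$. If $v\in V(H)$, then the single edge $uv$ is the desired ear. Otherwise $v\notin V(H)$, and here I invoke $2$-connectivity: $G-u$ is connected, so there is a path in $G-u$ from $v$ to $V(H)\setminus\{u\}$; taking a shortest such path $Q$ ending at $w\in V(H)$ (so $Q$ meets $H$ only at $w$, and $w\neq u$), the concatenation $u\,v\,\cdots\,w$ is an ear with distinct endpoints $u,w\in V(H)$ and all internal vertices outside $H$.

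Each growth step strictly increases the number of edges, so since $G$ is finite the chain terminates, and it can terminate only at $H=G$, for otherwise the growth step would apply again. Recording the successive ears as $G_1,\dots,G_k$ then yields an ear decomposition with $G_0=C$, which simultaneously shows that the prescribed cycle $C$ is the initial cycle of some ear decomposition. I expect the main obstacle to be this growth step, specifically the careful use of $2$-connectivity to route a path from the ``new'' vertex $v$ back into $H$ while avoiding the attachment vertex $u$, so as to guarantee that the two endpoints of the resulting ear are distinct; the termination argument and the connectivity verification in the sublemma are then routine bookkeeping.
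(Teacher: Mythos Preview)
The paper does not supply a proof of this lemma; it is quoted from Whitney and used as a black box in the proof of Theorem~\ref{thm:characterisation}. Your argument is the standard textbook proof of Whitney's ear-decomposition theorem (induction on ears for the ``if'' direction, and the greedy growth of ears from an arbitrary initial cycle using $2$-connectivity to route the return path for the ``only if'' and ``moreover'' parts), and it is correct as written.
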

%%%%%%%%%%%%%%%%%%%%%%%%%%%%%%%%%%%%%%%%%%%%%%%%%%%%%%%%%%%%%%%%%%%%%%%%%%%%%%%%%%%%%%%%%
%%%%%%%%%%%%%%%%%%%%%%%%%%%%%%%%%%%%%%%%%%%%%%%%%%%%%%%%%%%%%%%%%%%%%%%%%%%%%%%%%%%%%%%%%
%%%%%%%%%%%%%%%%%%%%%%%%%%%%%%%%%%%%%%%%%%%%%%%%%%%%%%%%%%%%%%%%%%%%%%%%%%%%%%%%%%%%%%%%%
%%%%%%%%%%%%%%%%%%%%%%%%%%%%%%%%%%%%%%%%%%%%%%%%%%%%%%%%%%%%%%%%%%%%%%%%%%%%%%%%%%%%%%%%%
\section{Main Results}\label{main}
The aim of this section is to find a bound for metric dimension in terms of
order and girth of a graph and characterize all graphs which attend this bound.
This bound is presented in the next theorem.
\begin{thm}\label{thm:B<n-g+2}
Let $G$ be a graph of order $n$. If $G$ has a cycle, then $\beta(G)\leq n-g(G)+2$.
\end{thm}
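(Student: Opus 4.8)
The plan is to produce an explicit resolving set of size $n-g(G)+2$. Fix a shortest cycle $C=(v_1,v_2,\ldots,v_g,v_1)$ of $G$, where $g=g(G)$, and set $W=V(G)\setminus\{v_2,v_3,\ldots,v_{g-1}\}$, so that $|W|=n-g+2$ and $v_1,v_g\in W$. I will show that $W$ is a resolving set; this yields $\beta(G)\le|W|=n-g(G)+2$. Throughout, write $d_C(\cdot,\cdot)$ for distance measured along the cycle $C$, so $d_C(x,y)$ is the length of the shorter of the two arcs of $C$ joining $x$ and $y$.

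The essential preliminary step is that a shortest cycle is an \emph{isometric} subgraph of $G$, i.e.\ $d(x,y)=d_C(x,y)$ for all $x,y\in V(C)$; in particular the distances in $G$ from $v_1$ and from $v_g$ to each of $v_2,\ldots,v_{g-1}$ agree with the cycle distances. Suppose this fails, and among all pairs $x,y\in V(C)$ with $d(x,y)<d_C(x,y)$ choose one for which $\ell:=d(x,y)$ is minimum; then $d_C(x,y)\ge 2$, so each of the two arcs of $C$ from $x$ to $y$ has length at least $2$. Let $P$ be a shortest path from $x$ to $y$ in $G$. If $P$ had an internal vertex $w\in V(C)$, then $d(x,w),d(w,y)<\ell$, so minimality of $\ell$ forbids $d(x,w)<d_C(x,w)$ and $d(w,y)<d_C(w,y)$, which forces $d(x,w)=d_C(x,w)$ and $d(w,y)=d_C(w,y)$; but then $d_C(x,y)\le d_C(x,w)+d_C(w,y)=d(x,w)+d(w,y)=\ell<d_C(x,y)$, a contradiction. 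Hence $P$ is internally disjoint from $C$, so $P$ together with either arc of $C$ from $x$ to $y$ is a cycle; since $C$ is a shortest cycle, both of these cycles have length at least $g$, and adding the two inequalities gives $2\ell+g\ge 2g$, i.e.\ $\ell\ge g/2\ge d_C(x,y)>\ell$, a contradiction. This isometry step, which uses that $C$ is shortest rather than an arbitrary cycle, is the part of the argument that needs the most care.

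Granting the isometry, it remains to verify that $W$ resolves $G$. If $x\ne y$ and one of them, say $x$, lies in $W$, then the coordinate of the metric representation indexed by $x$ equals $0$ at $x$ and is positive at $y$, so $x$ resolves the pair; hence the only pairs needing attention are pairs $v_i,v_j$ with $2\le i<j\le g-1$. Since $v_1,v_g\in W$ and, by the isometry, the relevant distances in $G$ are exactly the cycle distances, it is enough to show that the map $i\mapsto\bigl(d_C(v_1,v_i),\,d_C(v_g,v_i)\bigr)$ is injective on $\{2,\ldots,g-1\}$. Writing $p=i-1\in\{1,\ldots,g-2\}$, these coordinates are $\min\{p,\,g-p\}$ and $\min\{p+1,\,g-1-p\}$; a short case check according as $2p\le g-2$, $2p=g-1$ (which occurs only for $g$ odd), or $2p\ge g$ shows the second coordinate is respectively one greater than, equal to, or one less than the first, and that within each range the pair is strictly monotone in $p$. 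The three ranges therefore contribute pairwise disjoint sets of values, so the map is injective — equivalently, two adjacent vertices of a cycle form a resolving set for it. Consequently $W$ is a resolving set of $G$ and $\beta(G)\le n-g(G)+2$.
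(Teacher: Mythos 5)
Your proposal is correct and follows essentially the same route as the paper: delete $g-2$ consecutive internal vertices of a shortest cycle and use that the two remaining adjacent cycle vertices form a metric basis of $C_g$. The only difference is that you explicitly prove the two facts the paper leaves implicit — that a shortest cycle is an isometric subgraph and that two adjacent vertices resolve a cycle — and both of your verifications are sound.
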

\begin{proof}{
Let $g(G)=g$ and $C_g=(v_1,v_2,\ldots,v_g,v_1)$ be a shortest cycle in $G$.
Since  $\{v_1,v_2\}$ is a metric basis of $C_g$, $V(G)\setminus\{v_3,\ldots,v_g\}$
is a resolving set for $G$ of size $n-g(G)+2$. Therefore, $\beta(G)\leq n-g(G)+2$.
}\end{proof}
Note that,  $\beta(K_n)=n-1=n-g(K_n)+2$, $\beta(C_n)=2=n-g(C_n)+2$, and for $r,s\geq2$,
$\beta(K_{r,s})=r+s-2=n-g(K_{r,s})+2$. Therefore, the bound in Theorem~\ref{thm:B<n-g+2}
is tight for these graphs. In the remainder of this section, it is proved that these are
all graphs such that this bound is tight for them. First some  required results are
presented.
\begin{pro}\label{pro:components}
Let $v$ be a cut vertex in a graph $G$. Then each resolving set for $G$
can be disjoint from at most one component of $G\setminus\{v\}$. Moreover, if
$W$ is a resolving set for $G$ which is not disjoint from two
components of $G\setminus\{v\}$, then $W\setminus\{v\}$ is a resolving set for $G$.
\end{pro}
\begin{proof}{
Let $W$ be a resolving set for $G$ and $H$ and $K$ be two
components of $G\setminus\{v\}$. If $W\cap V(H)=W\cap V(K)=\emptyset$, then let
$x\in V(H)$ and $y\in V(K)$ such that $x\sim v$ and $y\sim v$. Therefore, for
each $w\in W$,
$$d(x,w)=d(x,v)+d(v,w)=1+d(v,w)=d(y,v)+d(v,w)=d(y,w),$$
which contradicts the assumption that $W$ is a resolving set for $G$. Thus $W$
can be disjoint from at most one component of $G\setminus\{v\}$.
\par
Now let $W\cap V(H)\neq\emptyset$, $W\cap V(K)\neq\emptyset$, $x\in W\cap V(H)$,
and $y\in W\cap V(K)$. If $W\setminus\{v\}$ is not a resolving set for $G$,
then there exist vertices $a,b\in V(G)$ such that $d(a,v)\neq d(b,v)$ and
for each $w\in W\setminus\{v\}$, $d(a,w)=d(b,w)$. If $a,b\notin V(H)$, then
$$d(a,x)=d(a,v)+d(v,x)\neq d(b,v)+d(v,x)=d(b,x).$$ This gives $d(a,x)\neq d(b,x)$,
which is impossible. Hence, $a,b\in V(H)$. Therefore,
$$d(a,y)=d(a,v)+d(v,y)\neq d(b,v)+d(v,y)=d(b,y).$$ That is $d(a,y)\neq d(b,y)$.
This contradiction implies that $W\setminus\{v\}$ is a resolving set for $G$.}\end{proof}
\begin{cor}\label{cor:leaf}
Let $u$ be a vertex of degree $1$ in a graph $G$ and $v$ be the neighbor of $u$.
If $W$ is a resolving set for $G$,
then $(W\cup\{u\})\setminus\{v\}$ is also a resolving set for $G$.
\end{cor}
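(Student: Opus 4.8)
The plan is to exploit a single structural feature of a pendant (degree-one) vertex. If $\deg(u)=1$ and $v$ is its unique neighbour, then every path from a vertex $x\neq u$ to $u$ must traverse the edge $uv$ as its last step, so
$$d(x,u)=d(x,v)+1\quad\mbox{for all }x\in V(G)\setminus\{u\}.$$
In words, along all vertices other than $u$ the coordinate indexed by $u$ and the coordinate indexed by $v$ of a metric representation differ by the constant $1$; hence they separate exactly the same pairs of vertices of $V(G)\setminus\{u\}$. This identity is the only fact I would use.

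Given this, I would set $W'=(W\cup\{u\})\setminus\{v\}$ and verify directly that distinct $a,b\in V(G)$ are separated by $W'$. If $a=u$ or $b=u$, then $u\in W'$ and $d(u,u)=0\neq d(b,u)$, so $W'$ separates the pair. If $a,b\neq u$, then since $W$ resolves $G$ there is some $w\in W$ with $d(a,w)\neq d(b,w)$; if $w\neq v$ then $w\in W'$ and we are done, while if $w=v$ then $d(a,v)\neq d(b,v)$, so by the displayed identity $d(a,u)=d(a,v)+1\neq d(b,v)+1=d(b,u)$, and again $u\in W'$ separates the pair. Thus $W'$ is a resolving set for $G$. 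Notice that no case split on whether $v\in W$ or $u\in W$ is needed: if $v\notin W$ the case $w=v$ simply does not arise, and if $v\in W$ the argument replaces the lost $v$-coordinate by the $u$-coordinate.

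I do not anticipate a real obstacle; the only point deserving a moment's care is the degenerate situation in which $\beta(G)$ is so small that $W\subseteq\{u,v\}$, for then $W'=\{u\}$ and one must check that a lone pendant vertex can still resolve $G$ — which is precisely what the displayed identity delivers. As an alternative presentation (fitting the placement of this statement as a corollary of Proposition~\ref{pro:components}), one may argue that $W\cup\{u\}$ is resolving and that $\{u\}$ is a component of $G\setminus\{v\}$ met by $W\cup\{u\}$; then $W\cup\{u\}$ is disjoint from two components of $G\setminus\{v\}$ unless it meets \emph{no} component other than $\{u\}$, i.e. unless $W\subseteq\{u,v\}$, so outside that small case the ``moreover'' part of Proposition~\ref{pro:components} yields that $(W\cup\{u\})\setminus\{v\}$ resolves $G$, the excluded case being handled by the identity $d(x,u)=d(x,v)+1$ exactly as above. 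Both routes are short, and I would favour the self-contained first one.
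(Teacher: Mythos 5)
Your proof is correct, and it takes a genuinely different route from the paper's. The paper derives the corollary from Proposition~\ref{pro:components}: it observes that $v$ is a cut vertex with $\langle\{u\}\rangle$ a component of $G\setminus\{v\}$, applies the ``moreover'' part of that proposition when $W$ meets some other component, and then handles the degenerate case $W\subseteq\{u,v\}$ by a separate (and somewhat roundabout) argument about the representations $r(x|\{u,v\})=(a,a-1)$. You instead isolate the single identity $d(x,u)=d(x,v)+1$ for all $x\neq u$, which says that the $u$-coordinate and the $v$-coordinate separate exactly the same pairs in $V(G)\setminus\{u\}$, and then check directly that $W'=(W\cup\{u\})\setminus\{v\}$ resolves every pair: pairs involving $u$ are handled because $u\in W'$, and for the rest any separating $w\in W$ either survives into $W'$ or equals $v$, in which case $u$ separates the pair by the identity. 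This is shorter, needs no case split on whether $W\subseteq\{u,v\}$, and is self-contained; what it gives up is only the structural tidiness of presenting the statement as a literal corollary of Proposition~\ref{pro:components} --- a route you also sketch correctly as an alternative. The one point worth stating explicitly in a final write-up is the justification of the identity itself (every $x$--$u$ path must end with the edge $vu$ because $v$ is the unique neighbour of $u$), which you do supply.
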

\begin{proof}{
Let $W$ be a resolving set for $G$. Clearly $W\cup\{u\}$ is also
a resolving set for $G$. Note that $v$ is a cut vertex of $G$
and $\langle\{u\}\rangle$ is a component of $G\setminus\{v\}$.
If $W\cap(V(G)\setminus\{u,v\})\neq\emptyset$, then by
Proposition~\ref{pro:components},
$(W\cup\{u\})\setminus\{v\}$ is also a resolving set for $G$.
If $W\subseteq\{u,v\}$, then  by
Proposition~\ref{pro:components}, $G\setminus\{v\}$ has exactly two components.
On the other hand, for each $x\in V(G)\setminus\{u\}$, $r(x|\{u,v\})=(a,a-1)$,
for some integer $a\geq1$. Since $\{u,v\}$ is a resolving set for $G$, the first
coordinate of the metric representation of all vertices in $V(G)\setminus\{u\}$
are different from each other. Therefore,
$\{u\}=(W\cup\{u\})\setminus\{v\}$ is also
 a resolving set for $G$.
}\end{proof}
The following proposition states that all graphs $G$ of order $n$ with
metric dimension $n-g(G)+2$ is $2$-connected.
\begin{pro}\label{pro:2-connected}
Let $G$ be a graph of order $n$ which has a cycle. If $\beta(G)=n-g(G)+2$,
then $G$ is $2$-connected.
\end{pro}
\begin{proof}{ Suppose, on the contrary, that $v$ is a cut vertex of $G$.
Let $C_g=(v_1,v_2,\ldots,v_g,v_1)$ be a shortest cycle in $G$. Since $v$
is a cut vertex, there exists a component $H$ of $G\setminus\{V\}$, such that
$V(C_g)\subseteq V(H)\cup\{v\}$. By the assumption, $V(G)\setminus\{v_3,\ldots,v_g\}$
is a basis of $G$, which is not disjoint from at least two
components of $G\setminus\{v\}$.
Therefore, by Proposition~\ref{pro:components},
$v\in\{v_3,\ldots,v_g\}$, say $v=v_i$, $3\leq i\leq g$. But
$B=V(G)\setminus\{v_1,\ldots,v_{i-2},v_{i+2},\ldots,v_g\}$ is a basis of $G$ which contains
$v=v_i$, since $B$ is not disjoint from at least two
components of $G\setminus\{v\}$.
Thus, by Proposition~\ref{pro:components}, $B\setminus\{v\}$ is a resolving
set for $G$ of size smaller than $\beta(G)$. This contradiction implies that $G$ is
a $2$-connected graph.
}\end{proof}
\begin{thm}\label{thm:characterisation}
Let $G$ be a graph of order $n$ which has a cycle. Then $\beta(G)=n-g(G)+2$
if and only if $G$ is a cycle $C_n$, complete graph $K_n$, $n\geq3$, or
complete bipartite graph,
$K_{r,s}$, $r,s\geq2$.
\end{thm}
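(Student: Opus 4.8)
The \emph{if} direction is exactly the computation recorded just after Theorem~\ref{thm:B<n-g+2} ($\beta(K_n)=n-1=n-g(K_n)+2$ for $n\ge3$; $\beta(C_n)=2=n-g(C_n)+2$; $\beta(K_{r,s})=r+s-2=n-g(K_{r,s})+2$ for $r,s\ge2$), so only the converse needs work. Assume $\beta(G)=n-g+2$, where $g=g(G)$. By Proposition~\ref{pro:2-connected}, $G$ is $2$-connected, so in particular $n\ge g\ge3$. The plan is to split on the value of $g$: for $g\le4$ one reads off the answer from the known extremal characterisations, and for $g\ge5$ one argues by ear decomposition.

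If $g=3$, then $\beta(G)=n-1$, so $G=K_n$ (with $n\ge3$) by Theorem~\ref{thm:B=n-1}. If $g=4$, then $\beta(G)=n-2$, so by Theorem~\ref{thm:n-2} the graph $G$ is one of $K_{s,t}$, $K_s\vee\overline K_t$, or $K_s\vee(K_t\cup K_1)$; but every graph on this list is either a tree (hence has no cycle) or contains a triangle, the only exception being $K_{s,t}$ with $s,t\ge2$, which has girth $4$. Hence $G=K_{r,s}$ with $r,s\ge2$.

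The substantial case is $g\ge5$, where I claim $G=C_g$ (so $n=g$). Suppose not. Since $G$ is $2$-connected, Theorem~\ref{thm:whitney} gives an ear decomposition of $G$ whose initial cycle is a shortest cycle $C_g=(v_1,\dots,v_g,v_1)$; let $P$ be the first ear. A shortest cycle is induced (a chord would produce a shorter cycle), so $P$ is not a single edge and therefore has internal vertices; after relabelling $C_g$ I may write $P=(v_1,w_1,\dots,w_p,v_{b+1})$ with $w_1,\dots,w_p\notin V(C_g)$ and $p\ge1$, where the two arcs of $C_g$ joining $v_1$ to $v_{b+1}$ have lengths $b$ and $a$ with $1\le a\le b$ and $a+b=g$. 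The cycle formed by $P$ together with the length-$a$ arc has length $p+1+a\ge g$, so $p\ge b-1\ge\lceil g/2\rceil-1\ge2$. Now let $B$ be the set obtained from $V(G)$ by deleting the $g-2$ vertices of $C_g$ other than $v_{b+1}$ and $v_{b+2}$ (indices read cyclically) and also deleting the internal vertex $w_p$ of $P$; thus $|B|=n-g+1$. To reach a contradiction I would check that $B$ is a resolving set by separating the pairs lying in $V(G)\setminus B$ (all that is needed, since every vertex of $B$ has a zero coordinate). Two deleted vertices of $C_g$ are separated already by $\{v_{b+1},v_{b+2}\}$: a shortest cycle is isometrically embedded in $G$, so distances from these two vertices coincide with distances in $C_g$, where two adjacent vertices form a metric basis. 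A deleted vertex $v_i\ne v_b$ is separated from $w_p$ by $v_{b+1}$, since $d_G(v_i,v_{b+1})\ge2>1=d_G(w_p,v_{b+1})$. Finally, $v_b$ is separated from $w_p$ by $w_{p-1}$, which lies in $B$ because $p\ge2$: indeed $d_G(w_p,w_{p-1})=1$, whereas $d_G(v_b,w_{p-1})\ge2$, since an edge $v_bw_{p-1}$ would close the $4$-cycle $(v_b,w_{p-1},w_p,v_{b+1},v_b)$, contradicting $g\ge5$. Hence $\beta(G)\le n-g+1$, contradicting the hypothesis, and so $G=C_g$.

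The only genuinely delicate part is the case $g\ge5$; within it the key points are the realisation that one must also delete an internal vertex of the first ear (deleting cycle vertices alone does not suffice), and the separation of the two vertices $v_b$ and $w_p$ adjacent to the end $v_{b+1}$ of $P$, which is exactly where the hypothesis $g\ge5$ enters. I would record beforehand the two standard facts that are already used implicitly in the proof of Theorem~\ref{thm:B<n-g+2}: a shortest cycle of a graph is an induced cycle, and it is isometrically embedded.
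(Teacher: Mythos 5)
Your proof is correct and follows the same overall strategy as the paper's: reduce to $2$-connected graphs via Proposition~\ref{pro:2-connected}, take an ear decomposition whose initial cycle is a shortest cycle (Theorem~\ref{thm:whitney}), use the first ear to rule out everything except $G=C_g$ or $g\le4$, and finish with Theorems~\ref{thm:B=n-1} and~\ref{thm:n-2}. Where you diverge is in how the first ear is exploited. The paper removes the $g-2$ cycle vertices other than the ear endpoint $x_0=v_i$ and its neighbour $v_{i+1}$, together with the internal ear vertex $x_2$, and argues by contradiction: the resulting set has size $\beta(G)-1$, hence some pair is unresolved, and that pair forces a third neighbour of $x_1$, contradicting that $x_1$ is internal to the ear; this needs the ear to have length at least $3$ and yields $g\le4$. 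You instead split on $g$, show for $g\ge5$ that the first ear has at least two internal vertices ($p\ge b-1\ge\lceil g/2\rceil-1\ge2$), delete the internal vertex $w_p$ adjacent to the endpoint $v_{b+1}$, and verify \emph{directly} that the resulting $(n-g+1)$-set resolves, using that a shortest cycle is chordless and isometric and that $g\ge5$ forbids the $4$-cycle $(v_b,w_{p-1},w_p,v_{b+1},v_b)$. Your version buys a little extra rigor: the paper's punchline asserts that the extra edge $x_1b$ raises the degree of $x_1$ \emph{in $C_g\cup G_1$}, which is not literally what the existence of that edge in $G$ gives (it could be introduced by a later ear), whereas your girth-based exclusion of the adjacency $v_b\sim w_{p-1}$ sidesteps this, and you also record explicitly the two facts about shortest cycles that the paper uses tacitly. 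The costs are comparable; both arguments hinge on deleting one internal ear vertex in addition to $g-2$ cycle vertices.
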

\begin{proof}{It is easy to see that if $G$ is a cycle $C_n$,
complete graph $K_n$, $n\geq3$, or
complete bipartite graph,
$K_{r,s}$, $r,s\geq2$, then   $\beta(G)=n-g(G)+2$.
Now let $\beta(G)=n-g(G)+2$ and $C_g=(v_1,v_2,\ldots,v_g,v_1)$ be a shortest
cycle in $G$. By Proposition~\ref{pro:2-connected}, $G$ is a $2$-connected graph.
Therefore, By Theorem~\ref{thm:whitney}, $G$ has an ear decomposition with initial
cycle $C_g$. Assume that $C_g,G_1,G_2,\ldots,G_k$ be an ear decomposition of
$G$ with initial cycle $C_g$. If $G=C_g$, then $G$ is a cycle as it is desired. Now let
$G\neq C_g$ and
$G_1=(x_0,x_1,\ldots,x_t)$. Thus
$x_0,x_t\in V(C_g)$. Without loss of generality one can assume that
 $x_0=v_i$ and $x_t=v_j$, where $1\leq i<j\leq g$.
Since $C_g$ is a shortest cycle in $G$, $j-i\leq t$ and $g+i-j\leq t$. If
$t\geq3$, then the set
$$W=V(G)\setminus\{x_2,v_1,v_2,\ldots,v_{i-1},v_{i+2},\ldots, v_g\}$$
is not a resolving set for $G$, because $|W|=\beta(G)-1$. Therefore, there exist
vertices $a,b\in V(G)\setminus W$ such that $r(a|W)=r(b|W)$.
Since $\{v_i,v_{i+1}\}$ is a basis
for $C_g$ and the distances in $C_g$ and $G$ are the same, $W\cap V(C_g)$ resolves $C_g$.
Consequently, $x_2\in\{a,b\}$, say $x_2=a$. Then, $b\in V(C_g)\setminus\{v_i,v_{i+1}\}$.
Hence, $d(b,x_1)=d(a,x_1)=1$, because $x_1\in W$. Note that $x_1$ is an internal vertex
of the ear $G_1$  of the graph $C_g\cup G_1$ and hence $x_1$ is of degree $2$ in
 $C_g\cup G_1$. But $x_1$ is adjacent to vertices $x_0,x_2$ and $b$.
Since $b\notin\{x_0,x_2\}$, $x_1$ is of degree at least $3$ in $C_g\cup G_1$.
This contradiction
implies that $r\leq2$. Therefore, $j-i\leq 2$ and $g+i-j\leq 2$.
Thus, $g\leq 4$. If $g=3$, then $\beta(G)=n-3+2=n-1$ and by Theorem~\ref{thm:B=n-1}, $G=K_n$.
If $g=4$, then $\beta(G)=n-4+2=n-2$ and by Theorem~\ref{thm:n-2}, $G$ is  $K_{r,s}$,
$K_r\vee {\overline K_s}$, or $K_r\vee(K_s\cup K_1)$. But $K_{r,s},~r,s\geq2$ is
the only graph among these graphs whose girth is $4$.
}\end{proof}
%%%%%%%%%%%%%%%%%%%%%%%%%%%%%%%%%%%%%%%%%%%%%%%%%%%%%%%%%%%%%%%%%%%%%%%%%%%%%%%%%%%%%%%%%
%%%%%%%%%%%%%%%%%%%%%%%%%%%%%%%%%%%%%%%%%%%%%%%%%%%%%%%%%%%%%%%%%%%%%%%%%%%%%%%%%%%%%%%%%
%%%%%%%%%%%%%%%%%%%%%%%%%%%%%%%%%%%%%%%%%%%%%%%%%%%%%%%%%%%%%%%%%%%%%%%%%%%%%%%%%%%%%%%%%
%%%%%%%%%%%%%%%%%%%%%%%%%%%%%%%%%%%%%%%%%%%%%%%%%%%%%%%%%%%%%%%%%%%%%%%%%%%%%%%%%%%%%%%%%
%%%%%%%%%%%%%%%%%%%%%%%%%%%%%%%%%%%%%%%%%%%%%%%%%%%%%%%%%%%%%%%%%%%%%%%%%%%%%%%%%%%%%%%%%

\end{document}